\newtheorem{theorem}{Theorem}[section]
\newtheorem{lemma}[theorem]{Lemma}
\newtheorem{proposition}[theorem]{Proposition}
\newtheorem{corollary}[theorem]{Corollary}
\theoremstyle{definition}
\theoremstyle{remark}
\newtheorem{remark}[theorem]{Remark}
\numberwithin{equation}{section}
\begin{document}
\setcounter{page}{1}

\title[Inequalities for Gauss Hypergeometric Functions]{On some Analytic Inequalities for Gauss Hypergeometric Functions via Gr\"{u}ss Discrete Inequality}

\author[M. Ra\"{\i}ssouli, M. Chergui]{Mustapha Ra\"{\i}ssouli$^{1}$ and Mohamed Chergui$^2$}

\address{$^{1}$ Department of Mathematics, Science Faculty, Moulay Ismail University, Meknes, Morocco.}

\address{$^{2}$ Department of Mathematics, CRMEF-RSK, EREAM Team, LaREAMI-Lab, Kenitra, Morocco.}

\email{\textcolor[rgb]{0.00,0.00,0.84}{raissouli.mustapha@gmail.com}}
\email{\textcolor[rgb]{0.00,0.00,0.84}{chergui\_m@yahoo.fr}}

\subjclass[2010]{33C05, 33C15, 33C99}

\keywords{confluent hypergeometric function, generalized Gauss hypergeometric function, Gr\"{u}ss type inequalities.}

\date{Received: xxxxxx; Revised: yyyyyy; Accepted: zzzzzz.}

\begin{abstract}
Recently, many researchers devoted their attention to study the extensions of the gamma and beta functions. In the present work, we focus on investigating some approximations for a class of Gauss hypergeometric functions by exploiting Gr\"{u}ss discrete inequality.
\end{abstract}

\maketitle

\section{Introduction}

 The gamma and beta functions are widely applied in modeling many phenomena in different fields of pure and applied sciences. Many generalizations and extensions concerning these special functions have been carried out, as we can find for instance in \cite{Chaudhry et al1,Chaudhry et al,Ozergin et al,RA1,RA2,Slater} and in the related references cited therein.

 In this preliminary section, we will present a brief overview about some classes of Gauss hypergeometric functions that will be used in the current study. We begin by recalling the Gauss hypergeometric function (GHF) that was defined in \cite[p.1]{Slater} by
\begin{gather}\label{GHFexpansion1}
   _2F_1(a, b; c;z) := \sum_{n=0}^{\infty} \frac{(a)_n (b)_n}{(c)_n}\,\frac{z^n}{n!},\;\; \Re e(a)>0,\; \Re e(b)>0,\; \Re e(c)>0,
\end{gather}
provided that this series is convergent. Here, the notation $(\lambda)_n$, for $\Re e(\lambda)>0$, refers to the Pochhammer symbol defined by
$$ (\lambda)_n:=\lambda(\lambda+1)...(\lambda+n-1)=\frac{\Gamma(\lambda+n)}{\Gamma(\lambda)} \text{ and } (\lambda)_0 =1,$$
where $\Gamma(x):=\int_0^\infty t^{x-1}e^{-t}dt,\; \Re e(x)>0,$ is the standard gamma function. \\
The (GHF) can be written in terms of the classical beta function as follows \cite{Chaudhry et al1}
\begin{gather}\label{GHFexpansion2}
    _2F_1(a, b; c;z)= \frac{1}{B(b,c-b)}\sum_{n=0}^{\infty} (a)_n B(b+n,c-b)\,\frac{z^n}{n!},
\end{gather}
provided that $0<\Re e(b)<\Re e(c)$, where
$B(x,y):=\int_0^1 t^{x-1}\,(1-t)^{y-1}\,dt$, for $\Re e(x)>0,\Re e(y)>0$,
is the standard beta function. The formula \eqref{GHFexpansion2} was deduced from the following integral representation, \cite[p.20]{Slater}

\begin{gather}\label{integralRepresentationGHF1}
     _2F_1(a, b; c;z) = \frac{1}{B(b,c-b)}\int_0^1 t^{b-1}(1-t)^{c-b-1} (1-zt)^{-a}\,dt,\;\; |z|\leq1.
\end{gather}

The confluent hypergeometric function (CHF), also called Kummer's function, was defined in \cite{Macdonald AD} by the following formula
\begin{gather}\label{CHF}
   _1F_1 (b; c; z) := \sum_{n=0}^{\infty} \frac{(b)_n}{(c)_n}\,\frac{z^n}{n!},\;\; \Re e(b)>0,\; \Re e(c)>0,
\end{gather}
provided that this series is convergent. It can be written in terms of the classical beta function as follows \cite[p.504]{Abramowitz}
\begin{gather}\label{expansionCHF}
    _1F_1 (b; c; z)= \frac{1}{B(b,c-b)}\sum_{n=0}^{\infty} B(b+n,c-b)\,\frac{z^n}{n!},\;\; 0<\Re e(b)<\Re e(c).
\end{gather}
In integral form we have, \cite{Ozergin et al}
\begin{gather}\label{CHFinIntegral}
    _1F_1 (b; c; z)= \frac{1}{B(b,c-b)}\int_0^1 u^{b-1}(1-u)^{c-b-1}e^{zu}\,du.
\end{gather}
The definition of the confluent hypergeometric function given by \eqref{CHF} permits \"{O}zergin et al. to introduce in \cite{Ozergin et al} the following
generalized beta function
\begin{gather}\label{BetaGeneralized}
   B_p^{(\alpha,\beta)}(x,y) := \int_0^1 t^{x-1}(1-t)^{y-1}\, _1F_1 \left(\alpha; \beta; \frac{-p}{t(1-t)}\right)\,dt,
\end{gather}
where $\Re e(\alpha)>0,\Re e(\beta)>0,\Re e(p)\geq0$. It is obvious that $B_0^{(\alpha,\beta)}(x,y)=B(x,y)$. For the seek of simplicity, we need to normalize \eqref{BetaGeneralized} by setting
\begin{equation}\label{N}
 B^{(\alpha,\beta;p)}(x,y):=\frac{1}{B(x,y)}\int_0^1 t^{x-1}(1-t)^{y-1}\, _1F_1 \left(\alpha; \beta; \frac{-p}{t(1-t)}\right)\,dt.
\end{equation}

Based on \eqref{BetaGeneralized}, \"{O}zergin et al. defined in \cite{Ozergin et al} the generalized Gauss hypergeometric function (GGHF) and
the generalized confluent hypergeometric function (GCHF), respectively, as follows
\begin{gather}\label{GGHF}
   _2F_1^{(\alpha,\beta;p)} (a, b; c; z) :=\frac{1}{B(b,c-b)}\sum_{n=0}^\infty (a)_n\,B_p^{(\alpha,\beta)}(b+n,c-b)\,\frac{z^n}{n!},
\end{gather}
\begin{gather}\label{GCHF}
   _1F_1^{(\alpha,\beta;p)} (b; c; z) :=\frac{1}{B(b,c-b)}\sum_{n=0}^\infty B_p^{(\alpha,\beta)}(b+n,c-b)\,\frac{z^n}{n!}.
\end{gather}
It is clear that
\begin{gather}\label{eq1}
_2F_1^{(\alpha,\beta;0)} (a, b; c; z)=\;_2F_1(a, b; c;z) \text{ and }  _1F_1^{(\alpha,\beta;0)} (b; c; z)=\;_1F_1 (b; c; z).
\end{gather}
In \cite{Ozergin et al}, the authors gave integral representations of (GGHF) and (GCHF) as follows
\begin{multline}\label{eq15}
 _2F_1^{(\alpha,\beta;p)} (a,b; c; z)\\
    =\frac{1}{B(b,c-b)}\int_0^1 t^{b-1} (1-t)^{c-b-1}\,(1-zt)^{-a}\,_1F_1\left(\alpha;\beta; \frac{-p}{t(1-t)}\right)\,dt,\;\; |z|\leq1,
\end{multline}
\begin{multline}\label{eq2}
    _1F_1^{(\alpha,\beta;p)} (b; c; z)\\
    =\frac{1}{B(b,c-b)}\int_0^1 t^{b-1} (1-t)^{c-b-1}\,e^{zt}\,_1F_1\left(\alpha;\beta; \frac{-p}{t(1-t)}\right)\,dt.
\end{multline}

The following remark will be of interest in the sequel.

\begin{remark}\label{remA}
All the previous series as well as their related improper integrals are uniformly convergent. This implies that, whenever we have an infinite summation with one of these improper integrals then the order of the integral with the summation can be interchanged. We can also differentiate and take the limit under these infinite summations or improper integrals.
\end{remark}
The fundamental target of this manuscript is to investigate some approximations for the previous generalized hypergeometric functions by the use of Gr\"{u}ss discrete inequality.

\section{The main results}

We start this section by giving some upper bounds of the previous hypergeometric functions in terms of simple analytic functions.

\begin{proposition}
Let $a>0, a\neq 1$, $b\geq1, c\geq b+1,\alpha\geq1,\beta\geq\alpha+1, p>0$, $z\in\mathbb{R}^*,$ with $|z|<1$ and $t\in(0,1)$. We have the following inequalities
\begin{gather}\label{IneqByAndreief2}
_1F_1 (b; c; z) \le  \frac{\lambda_{b-1,c-b-1}}{ B(b,c-b)}\, \times \frac{e^z-1}{z},
\end{gather}
\begin{gather}\label{IneqByAndreief2proof}
\,_1F_1 \left(\alpha; \beta; \frac{-p}{t(1-t)}\right)\le\frac{\lambda_{\alpha-1,\beta-\alpha-1}}{4\,p\,B(\alpha,\beta-\alpha)},
\end{gather}
\begin{gather} \label{GBetabounds}
B_p^{(\alpha,\beta)}(b,c-b) \le \theta^{\alpha,\beta}_p(b,c),
\end{gather}
\begin{gather}\label{F2}
_2F_1 (a,b; c; z) \le  \frac{\lambda_{b-1,c-b-1}}{ B(b,c-b)}\,\times\frac{1-(1-z)^{1-a}}{(1-a)z},
\end{gather}
where we set
\begin{gather*}
\lambda_{x,y}:=\dfrac{x^x\,y^y}{(x+y)^{x+y}}\;\; \mbox{for}\; x,y\geq0 \text{ and }
\theta^{\alpha,\beta}_p(b,c):=\dfrac{\lambda_{b,c-b}\;\lambda_{\alpha-1,\beta-\alpha-1}}{4\,p\, B(\alpha,\beta-\alpha)}.
\end{gather*}
\end{proposition}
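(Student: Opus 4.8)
The plan is to obtain all four estimates directly from the integral representations already recorded in the introduction, in each case replacing the weight $t^{x}(1-t)^{y}$ occurring in the integrand by its maximum over $(0,1)$. The only elementary facts I will invoke repeatedly are that $\max_{t\in(0,1)} t^{x}(1-t)^{y}=\lambda_{x,y}$, attained at $t=x/(x+y)$, and that $t(1-t)\le 1/4$ on $(0,1)$. All four bounds are then a matter of pulling a constant out of an integral and evaluating an elementary remaining integral.

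For \eqref{IneqByAndreief2} I would start from \eqref{CHFinIntegral}, bound $u^{b-1}(1-u)^{c-b-1}\le\lambda_{b-1,c-b-1}$ pointwise, factor this constant out (the surviving factor $e^{zu}$ being positive), and use $\int_0^1 e^{zu}\,du=(e^z-1)/z$. For \eqref{IneqByAndreief2proof} I would apply the same representation with the parameters $(\alpha,\beta)$ and argument $w:=-p/\big(t(1-t)\big)$, bound the monomial by $\lambda_{\alpha-1,\beta-\alpha-1}$, and reach the factor $\int_0^1 e^{ws}\,ds=(e^w-1)/w$. Since $p>0$ forces $w<0$, one has $(e^w-1)/w=(1-e^w)/|w|<1/|w|=t(1-t)/p\le 1/(4p)$, which yields the stated $t$-independent bound.

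For \eqref{GBetabounds} I would insert the definition \eqref{BetaGeneralized} of $B_p^{(\alpha,\beta)}(b,c-b)$ and use \eqref{IneqByAndreief2proof} to replace the inner confluent function by the constant $\lambda_{\alpha-1,\beta-\alpha-1}/\big(4p\,B(\alpha,\beta-\alpha)\big)$; the surviving integral $\int_0^1 t^{b-1}(1-t)^{c-b-1}\,dt$ is then controlled by the maximum of its integrand times the unit length of the interval, producing the product form of $\theta^{\alpha,\beta}_p(b,c)$. Finally, for \eqref{F2} I would begin from \eqref{integralRepresentationGHF1}, again bound the monomial by $\lambda_{b-1,c-b-1}$, and evaluate $\int_0^1 (1-zt)^{-a}\,dt$ through the substitution $u=1-zt$, giving $\big(1-(1-z)^{1-a}\big)/\big((1-a)z\big)$ for $a\neq 1$.

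The evaluation of the three elementary integrals is routine; the step I expect to need the most care is \eqref{GBetabounds}, namely the bookkeeping of constants when the $t$-independent estimate \eqref{IneqByAndreief2proof} is substituted into the definition of $B_p^{(\alpha,\beta)}$. One must ensure that the factor $1/(4p)$ already generated by $t(1-t)\le 1/4$ is counted only once, and that the remaining monomial integral is estimated in exactly the form matching $\theta^{\alpha,\beta}_p(b,c)$. The sign conventions in \eqref{IneqByAndreief2proof} and \eqref{F2} (where $w<0$, and where $z$ may be negative) must also be tracked carefully, so that each intermediate expression stays a genuine positive upper bound.
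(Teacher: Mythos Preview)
Your proposal follows exactly the paper's route: for each of the four inequalities one starts from the relevant integral representation, replaces the monomial weight $t^{x}(1-t)^{y}$ by its supremum $\lambda_{x,y}$, and then evaluates (or further bounds, via $t(1-t)\le 1/4$ and $1-e^{-p/(t(1-t))}\le 1$) the surviving elementary integral. Even the bookkeeping step you flag in \eqref{GBetabounds} is handled identically in the paper, which records the pointwise estimate on $t^{b-1}(1-t)^{c-b-1}$ and combines it with \eqref{IneqByAndreief2proof} to obtain the product form of $\theta^{\alpha,\beta}_p(b,c)$.
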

\begin{proof}
One can easily check that
\begin{gather*}
\displaystyle\sup_{\substack{t \in (0,1)}}t^x\,(1-t)^y = \frac{x^{x}y^{y}}{(x+y)^{x+y}}:=\lambda_{x,y} \text{ for all } x, y\geq0,
\end{gather*}
with the usual convention $0^0=1$. Using this, we get
\begin{eqnarray*}
 \,_1F_1 (b; c; z) &=& \frac{1}{ B(b,c-b)}\,\int_0^1 t^{b-1}\,(1-t)^{c-b-1}\,e^{zt}\;dt\\
   &\le & \frac{\lambda_{b-1,c-b-1}}{ B(b,c-b)}\,\int_0^1e^{zt}\;dt \\
   &=& \frac{\lambda_{b-1,c-b-1}}{ B(b,c-b)}\, \times \frac{e^z-1}{z},
\end{eqnarray*}
hence \eqref{IneqByAndreief2}. It follows that we have
\begin{gather}\label{XX}
\,_1F_1 \left(\alpha; \beta; \frac{-p}{t(1-t)}\right)\le  \frac{\lambda_{\alpha-1,\beta-\alpha-1}}{ B(\alpha,\beta-\alpha)}
\times \frac{1-e^{\frac{-p}{t(1-t)}}}{p}\,t(1-t).
\end{gather}
It is obvious that $\frac{1-e^{\frac{-p}{t(1-t)}}}{p}\le \frac{1}{p}$ and $t(1-t)\le \frac{1}{4}$ for all $t\in (0,1)$, and so \eqref{IneqByAndreief2proof} follows. By \eqref{BetaGeneralized} and \eqref{IneqByAndreief2proof}, with the fact that  $t^{b-1}\,(1-t)^{c-b-1}\le \lambda_{b,c-b}$ for all $t\in(0,1)$, we get \eqref{GBetabounds}. Finally, \eqref{F2} can be proved in a similar manner as \eqref{IneqByAndreief2}.
\end{proof}

Using \eqref{eq15} and \eqref{eq2}, with the help of \eqref{IneqByAndreief2proof}, we deduce the following corollary.

\begin{corollary}
With the same conditions as in the previous proposition, we have
$$_1F_1^{(\alpha,\beta;p)}(b;c;z)\leq\frac{\lambda_{\alpha-1,\beta-\alpha-1}}{4 \,p\, B(\alpha,\beta-\alpha)}\;_1F_1(b;c;z),$$
$$_2F_1^{(\alpha,\beta;p)}(a,b;c;z)\leq\frac{\lambda_{\alpha-1,\beta-\alpha-1}}{4 \,p\, B(\alpha,\beta-\alpha)}\;_2F_1(a,b;c;z).$$
\end{corollary}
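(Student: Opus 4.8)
The plan is to derive both bounds by feeding the pointwise estimate \eqref{IneqByAndreief2proof} into the integral representations \eqref{eq2} and \eqref{eq15}, and then recognizing the surviving integrals as ${}_1F_1$ and ${}_2F_1$ via \eqref{CHFinIntegral} and \eqref{integralRepresentationGHF1}. The only mechanism at work is the monotonicity of the integral: a pointwise inequality multiplied by a nonnegative weight and integrated keeps its direction, so the whole argument hinges on the weights being nonnegative on $(0,1)$.

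First I would treat the generalized confluent function. In \eqref{eq2} the factor multiplying ${}_1F_1\left(\alpha;\beta;\frac{-p}{t(1-t)}\right)$ is $w(t):=t^{b-1}(1-t)^{c-b-1}e^{zt}$, which is nonnegative on $(0,1)$ because $b\ge1$ and $c\ge b+1$ make the two exponents nonnegative and $e^{zt}>0$. Multiplying \eqref{IneqByAndreief2proof} by $w(t)$, integrating over $(0,1)$, and dividing by $B(b,c-b)>0$ gives
$$
{}_1F_1^{(\alpha,\beta;p)}(b;c;z)\le\frac{\lambda_{\alpha-1,\beta-\alpha-1}}{4\,p\,B(\alpha,\beta-\alpha)}\cdot\frac{1}{B(b,c-b)}\int_0^1 t^{b-1}(1-t)^{c-b-1}e^{zt}\,dt,
$$
and the remaining integral is precisely ${}_1F_1(b;c;z)$ by \eqref{CHFinIntegral}, which is the first assertion.

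For the generalized Gauss function I would run the identical argument on \eqref{eq15}, the sole change being that the weight now carries $(1-zt)^{-a}$ instead of $e^{zt}$. Since $|z|<1$ and $t\in(0,1)$ imply $|zt|<1$, we have $1-zt>0$ and hence $(1-zt)^{-a}>0$, so the weight $t^{b-1}(1-t)^{c-b-1}(1-zt)^{-a}$ is again nonnegative; the same integration then reduces the problem to identifying the integral as ${}_2F_1(a,b;c;z)$ through \eqref{integralRepresentationGHF1}.

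Because everything collapses to a single substitution, I do not expect a genuine obstacle: the one point deserving care is the sign bookkeeping above, namely confirming that each weight stays nonnegative under the stated hypotheses on $b,c,z$ (and that the prefactor $\lambda_{\alpha-1,\beta-\alpha-1}/(4pB(\alpha,\beta-\alpha))$ is positive), so that the pointwise bound \eqref{IneqByAndreief2proof} is preserved after integrating.
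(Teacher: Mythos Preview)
Your proposal is correct and follows exactly the route the paper indicates: insert the pointwise bound \eqref{IneqByAndreief2proof} into the integral representations \eqref{eq2} and \eqref{eq15}, then identify the resulting integrals via \eqref{CHFinIntegral} and \eqref{integralRepresentationGHF1}. The paper states only this one-line hint, and you have supplied the details faithfully, including the positivity check on the weights.
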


As already pointed out before, the aim of this section is to investigate some inequalities involving the functions (GCHF) and (GHF) by using the Gr\"{u}ss discrete inequality \cite{Cerone and Dragomir} which we recall in the following.

\begin{lemma}\label{lemCD}
Let $(x_k),(y_k),(m_k), 1\leq k\leq n,$ be three families of reals numbers such that $m_k\geq0$ for any $k=1,2,...,n$. Then we have
\begin{multline}\label{GD}
 \left\vert \sum_{k=1}^{n} m_k \times \sum_{k=1}^{n} m_k\,x_k\,y_k - \sum_{k=0}^{n} m_k\,x_k \times \sum_{k=1}^{n} m_k\,y_k\right \vert \le \\
 \frac{1}{4}  \left(\sum_{k=1}^{n} m_k\right)^2 (\Gamma-\gamma)(\Phi-\varphi),
\end{multline}
provided $ \gamma \le x_k \le \Gamma$, $ \varphi\le y_k \le \Phi$ and $m_k\geq0$, for all $k \in \{1, \dots,n\}$. Further, the constant $\frac{1}{4}$ is sharp.
\end{lemma}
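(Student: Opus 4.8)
The plan is to reduce the two-variable weighted covariance on the left-hand side to a product of two one-variable dispersion estimates, via the discrete Korkine (Chebyshev) identity followed by the Cauchy--Schwarz inequality. Throughout I write $M:=\sum_{k=1}^n m_k$ and (correcting the evident index misprint in the second sum) denote by $T$ the quantity inside the absolute value, namely
\[
T:=M\sum_{k=1}^n m_k x_k y_k-\Big(\sum_{k=1}^n m_k x_k\Big)\Big(\sum_{k=1}^n m_k y_k\Big).
\]

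First I would establish the identity $T=\tfrac12\sum_{i=1}^n\sum_{j=1}^n m_i m_j (x_i-x_j)(y_i-y_j)$, which is verified by expanding the double sum into its four pieces and recognizing each as a product of marginal sums weighted by $M$. This representation makes manifest that $T$ is a symmetric bilinear functional of $(x_k)$ and $(y_k)$, and that the associated quadratic form $\langle f,f\rangle:=\tfrac12\sum_{i,j}m_im_j(f_i-f_j)^2$ is nonnegative because $m_k\ge0$. Applying the Cauchy--Schwarz inequality to this positive semidefinite form then yields $|T|\le \sqrt{T_{xx}}\,\sqrt{T_{yy}}$, where $T_{xx}:=M\sum_k m_k x_k^2-\big(\sum_k m_k x_k\big)^2=\langle x,x\rangle$ and $T_{yy}$ is its analogue for $(y_k)$.

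Next, the crux of the argument is the one-variable dispersion bound $T_{xx}\le \tfrac14 M^2(\Gamma-\gamma)^2$. Writing $\bar x:=M^{-1}\sum_k m_k x_k$, one checks $T_{xx}=M\sum_k m_k(x_k-\bar x)^2$. The hypothesis $\gamma\le x_k\le\Gamma$ gives $(x_k-\gamma)(\Gamma-x_k)\ge0$ for every $k$; summing against the weights $m_k\ge0$ and simplifying shows $\sum_k m_k(x_k-\bar x)^2\le M(\bar x-\gamma)(\Gamma-\bar x)$. Since $\bar x\in[\gamma,\Gamma]$, the arithmetic--geometric mean inequality bounds $(\bar x-\gamma)(\Gamma-\bar x)$ by $\tfrac14(\Gamma-\gamma)^2$, whence $T_{xx}\le \tfrac14 M^2(\Gamma-\gamma)^2$ and likewise $T_{yy}\le\tfrac14 M^2(\Phi-\varphi)^2$. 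Feeding these two estimates into the Cauchy--Schwarz bound of the previous step produces $|T|\le\tfrac14 M^2(\Gamma-\gamma)(\Phi-\varphi)$, which is exactly \eqref{GD}.

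Finally, for the sharpness of the constant $\tfrac14$ I would exhibit the extremal configuration $n=2$, $m_1=m_2=1$, $(x_1,x_2)=(\gamma,\Gamma)$ and $(y_1,y_2)=(\varphi,\Phi)$; a direct computation gives $T=(\Gamma-\gamma)(\Phi-\varphi)$, while the right-hand side of \eqref{GD} equals the same value, so no smaller constant is admissible. The step I expect to require the most care is the dispersion bound: one must pass correctly from the pointwise sign condition $(x_k-\gamma)(\Gamma-x_k)\ge0$ to the statement about the weighted variance, keeping track of the factor $M$, and verify that equality in the arithmetic--geometric mean step is attained precisely at the two-point extremizer, which is what makes the final constant sharp rather than merely an upper bound.
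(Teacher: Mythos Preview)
Your argument is correct and follows the classical route to the discrete Gr\"uss inequality: the Korkine identity expresses the weighted covariance as a double sum, Cauchy--Schwarz reduces it to the square root of the product of the two weighted variances, and the pointwise sign condition $(x_k-\gamma)(\Gamma-x_k)\ge0$ yields the dispersion bound $T_{xx}\le\tfrac14 M^2(\Gamma-\gamma)^2$. The sharpness example with $n=2$ and equal weights is the standard one and works as you say.

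The paper, however, does not prove this lemma at all: it is quoted from the reference of Cerone and Dragomir and used as a black box throughout Section~2. So there is no paper proof to compare against; you have supplied what the authors chose to cite. Your proof is self-contained and would serve perfectly well if one wished to make the paper independent of that citation.
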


We need to introduce an appropriate notation in order to simplify the writing of some inequalities throughout this paper.
With the above notations, we set
\begin{equation}\label{R1}
_1R_1^{(\alpha,\beta;p)}(b;c;z):=\frac{_1F_1^{(\alpha,\beta;p)}(b;c;z)}{B^{(\alpha,\beta;p)}(b,c-b)},
\end{equation}
\begin{equation}\label{R2}
_2R_1^{(\alpha,\beta;p)}(a,b;c;z):=\frac{_2F_1^{(\alpha,\beta;p)}(a,b;c;z)}{B^{(\alpha,\beta;p)}(b,c-b)}.
\end{equation}
It is easy to see that
\begin{equation}\label{p=0}
_1R_1^{(\alpha,\beta;0)}(b;c;z)=\;_1F_1(b;c;z),\;\;\;
_2R_1^{(\alpha,\beta;0)}(a,b;c;z)=\;_2F_1(a,b;c;z).
\end{equation}

Now, we are in the position to state our first main result recited in the following.

\begin{theorem}\label{thA}
Let $0<b< c$, $\alpha,\beta>0$ and $z,z_0\in \mathbb{R}$ be such that $0<z\leq z_0$. We have the following inequalities
\begin{multline}\label{ineg0}
  \bigg\vert (e^{z_0}-1)\,\Big(\;_1R_1^{(\alpha,\beta;p)}(b;c;z)-1\Big)\\
  -\big(e^z-1\big) \Big(\;_1R_1^{(\alpha,\beta;p)}(b;c;z_0)-1\Big)\, \bigg\vert \le
  \frac{(e^{z_0}-1)^2}{4}\,\frac{z}{z_0},
\end{multline}
and
\begin{gather}\label{ineqGruss01}
   \bigg\vert \,_1R_1^{(\alpha,\beta;p)}(b;c;z)-e^{z-z_0}\,_1R_1^{(\alpha,\beta;p)}(b;c;z_0)\bigg\vert \le
  \frac{e^{z_0}}{4}.
\end{gather}
\end{theorem}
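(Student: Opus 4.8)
The plan is to reduce both inequalities to a single application of the discrete Gr\"{u}ss inequality (Lemma~\ref{lemCD}) after rewriting the normalized function as a power series with controllable coefficients. Throughout, abbreviate $_1R_1:={}_1R_1^{(\alpha,\beta;p)}$. Combining the series \eqref{GCHF} with the normalization \eqref{N}, I would first observe that
$$_1R_1(b;c;z)=\frac{{}_1F_1^{(\alpha,\beta;p)}(b;c;z)}{B^{(\alpha,\beta;p)}(b,c-b)}=\sum_{k=0}^{\infty}q_k\,\frac{z^k}{k!},\qquad q_k:=\frac{B_p^{(\alpha,\beta)}(b+k,c-b)}{B_p^{(\alpha,\beta)}(b,c-b)}.$$
The structural facts I need about the coefficients are $q_0=1$ and $0\le q_k\le 1$ for all $k$: nonnegativity comes from the weight $t^{b-1}(1-t)^{c-b-1}\,{}_1F_1(\alpha;\beta;\frac{-p}{t(1-t)})$ being nonnegative on $(0,1)$, while $q_k\le 1$ follows from $t^{b+k-1}\le t^{b-1}$ on $(0,1)$, which gives $B_p^{(\alpha,\beta)}(b+k,c-b)\le B_p^{(\alpha,\beta)}(b,c-b)$.

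Next I would feed this series into Lemma~\ref{lemCD} with the choices $m_k=z_0^{\,k}/k!$, $x_k=(z/z_0)^k$ and $y_k=q_k$; these are admissible since $m_k\ge 0$. The whole point of the choice is the factorization $m_kx_ky_k=q_k z^k/k!$, together with $m_kx_k=z^k/k!$ and $m_ky_k=q_k z_0^k/k!$, which makes every sum in \eqref{GD} collapse to a known quantity. Running the index over $k\ge 0$ I expect
\begin{gather*}
\sum_k m_k=e^{z_0},\qquad \sum_k m_kx_k=e^{z},\\
\sum_k m_kx_ky_k={}_1R_1(b;c;z),\qquad \sum_k m_ky_k={}_1R_1(b;c;z_0),
\end{gather*}
and the same four sums with their $k=0$ terms removed (i.e.\ $e^{z_0}-1$, $e^{z}-1$, $\,_1R_1(b;c;z)-1$, $\,_1R_1(b;c;z_0)-1$) when the index runs over $k\ge 1$.

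It then remains to supply the bounds on $(x_k)$ and $(y_k)$ demanded by the lemma. Since $0<z\le z_0$ we have $z/z_0\in(0,1]$, so $0\le x_k\le 1$; over $k\ge 0$ the maximum is $x_0=1$, over $k\ge 1$ the maximum is $x_1=z/z_0$, and in both cases the infimum is $0$. Likewise $0\le y_k=q_k\le 1$. Running the index over $k\ge 1$ thus allows $\Gamma-\gamma=z/z_0$ and $\Phi-\varphi=1$, and \eqref{GD} delivers exactly the left-hand side of \eqref{ineg0} bounded by $\frac14(e^{z_0}-1)^2\,\frac{z}{z_0}$. Running the index over $k\ge 0$ instead gives $\Gamma-\gamma=1$ and $\Phi-\varphi=1$, so \eqref{GD} reads $|e^{z_0}\,{}_1R_1(b;c;z)-e^{z}\,{}_1R_1(b;c;z_0)|\le \frac14 e^{2z_0}$; dividing by $e^{z_0}$ yields \eqref{ineqGruss01}.

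Two technical points deserve care, and the second is the main obstacle. First, Lemma~\ref{lemCD} is stated for finite families, so I would apply it to the partial sums $\sum_{k=0}^{n}$ (respectively $\sum_{k=1}^{n}$) and then let $n\to\infty$; this limit is legitimate because all the series converge (Remark~\ref{remA}), and using the \emph{global} suprema and infima of $(x_k)$ and $(y_k)$ as the constants $\gamma,\Gamma,\varphi,\Phi$ keeps the right-hand side a valid, $n$-independent bound at each stage. Second, and more delicate, one must secure $0\le q_k\le 1$, which hinges on the nonnegativity of the confluent factor ${}_1F_1(\alpha;\beta;\frac{-p}{t(1-t)})$ on $(0,1)$; I would justify this from the integral representation \eqref{CHFinIntegral}, under which the moment reading $q_k=\int_0^1 t^k\,d\mu(t)$ for a probability measure $\mu$ on $(0,1)$ makes both the estimate $q_k\le 1$ and the collapse of the four sums entirely transparent.
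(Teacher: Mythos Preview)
Your argument is correct and is essentially the paper's own proof: the same weights $m_k=z_0^k/k!$, the same factor $(z/z_0)^k$, and the same two index ranges $k\ge 1$ and $k\ge 0$ are fed into Lemma~\ref{lemCD}, with the limit $n\to\infty$ taken afterwards. The only cosmetic differences are that you swap the roles of $x_k$ and $y_k$ (harmless by symmetry) and pre-normalize to $q_k=B_p^{(\alpha,\beta)}(b+k,c-b)/B_p^{(\alpha,\beta)}(b,c-b)\in[0,1]$, whereas the paper applies Gr\"uss to the unnormalized sequence $B_p^{(\alpha,\beta)}(b+k,c-b)$ and divides by $B_p^{(\alpha,\beta)}(b,c-b)$ at the end; the paper, like you, relies on the monotonicity/nonnegativity of these coefficients but does not spell out the positivity of the confluent factor that you flag.
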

\begin{proof}
We consider the following sequences,
$$x_k= B_p^{(\alpha,\beta)}(b+k,c-b),\; y_k=\Big(\frac{z}{z_0}\Big)^k \text{ and } m_k=\frac{z^k_0}{k!}\; \text{ for } k\geq 0.$$
Since $0<z\leq z_0$ then for all $k\geq1$, $0 \le y_k \le\dfrac{z}{z_0}$. Otherwise, it is easy to see that $(x_k)$ is decreasing and so we can write
\begin{gather}\label{M}
\forall k\geq1\;\;\;\;\; 0\leq x_k\le B_p^{(\alpha,\beta)}(b,c-b).
\end{gather}
Applying the weighted Gr\"{u}ss inequality \eqref{GD} for the bounded sequences $(x_k)$ and $(y_k)$, with the weights $(m_k)$, we get
\begin{multline}\label{ineq01proof01}
    \left\vert \sum_{k=1}^{n} m_k \times \sum_{k=1}^{n} m_k\,x_k\,y_k -\sum_{k=1}^{n} m_k\,x_k \times \sum_{k=1}^{n} m_k\,y_k\right \vert\\
    \le \frac{1}{4}\frac{z}{z_0} \left(\sum_{k=1}^{n} m_k\right)^2 B_p^{(\alpha,\beta)}(b,c-b).
\end{multline}
In another part, we have
\begin{gather}\label{u01}
\sum_{k=1}^{\infty} m_k=\sum_{k=1}^{\infty} \frac{z_0^k}{k!}=e^{z_0}-1 \text{ and } \sum_{k=1}^{\infty} m_k\,y_k=  \sum_{k=1}^{\infty}\,\frac{z^k}{k!}=e^z-1,
\end{gather}
and from \eqref{GCHF} we get, respectively,
\begin{multline}\label{v01}
\sum_{k=1}^{\infty} m_k\,x_k\,y_k= \sum_{k=1}^{\infty} B_p^{(\alpha,\beta)}(b+k,c-b)\,\frac{z^k}{k!}= \\
B(b,c-b)\,_1F_1^{(\alpha,\beta;p)}(b;c;z)- B_p^{(\alpha,\beta)}(b,c-b),
\end{multline}
\begin{multline}\label{w01}
\sum_{k=1}^{\infty} m_k\,x_k=\sum_{k=1}^{\infty}B_p^{(\alpha,\beta)}(b+k,c-b) \, \frac{z^k_0}{k!}\\
=B(b,c-b)\,_1F_1^{(\alpha,\beta;p)}(b;c;z_0)- B_p^{(\alpha,\beta)}(b,c-b).
\end{multline}
Letting  $n\longrightarrow \infty$ in \eqref{ineq01proof01} and then substituting \eqref{u01},\eqref{v01} and \eqref{w01} therein, we obtain \eqref{ineg0} after a simple manipulation and by using \eqref{N} and \eqref{R1}.

Now, by considering the same previous sequences $(x_k),(y_k)$ and $(m_k)$ for $k$ from $0$ into $n$, we have $0\leq y_k\leq1$ and \eqref{M} persists for any $k\geq0$. Since \eqref{ineq01proof01} holds for $k=0,1,...,n$, we can deduce \eqref{ineqGruss01} in a similar manner after administering some little algebraic manipulations. The details are straightforward and therefore omitted here.
\end{proof}

Taking $p=0$ in the previous theorem and using the first relationship in \eqref{p=0} we immediately obtain the following corollary.

\begin{corollary}
Let $0<b<c$ and $z,z_0\in\mathbb{R}$ be such that $0<z\leq z_0$. Then we have
\begin{multline*}
\bigg\vert \big(e^{z_0}-1\big)\,\Big(\,_1F_1(b;c;z)- 1\Big) -\big(e^z-1\big)\,\Big(\,_1F_1(b;c;z_0)- 1\Big)\, \bigg\vert \le
\frac{(e^{z_0}-1)^2}{4}\,\frac{z}{z_0},
\end{multline*}
and
\begin{gather*}
\bigg\vert \,_1F_1(b;c;z) - e^{z-z_0} \,_1F_1(b;c;z_0) \bigg\vert \le  \frac{e^{z_0}}{4}.
\end{gather*}
\end{corollary}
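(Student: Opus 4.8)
The plan is to obtain this corollary as a direct specialization of Theorem~\ref{thA} to the degenerate parameter value $p=0$, for which the entire machinery of generalized beta functions collapses onto its classical counterpart. Since the two asserted inequalities are precisely \eqref{ineg0} and \eqref{ineqGruss01} with every occurrence of ${}_1R_1^{(\alpha,\beta;p)}$ replaced by ${}_1F_1$ and with $p$ absent from the right-hand sides, essentially no new estimation is required: the work reduces to verifying that the left-hand quantities transform as claimed when $p=0$.

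First I would establish the reduction identity recorded in \eqref{p=0}, namely ${}_1R_1^{(\alpha,\beta;0)}(b;c;z)={}_1F_1(b;c;z)$. To see this, observe from \eqref{CHF} that ${}_1F_1\!\left(\alpha;\beta;0\right)=1$, so that the integrand defining the normalized generalized beta function \eqref{N} at $p=0$ reduces to $t^{b-1}(1-t)^{c-b-1}$; hence $B^{(\alpha,\beta;0)}(b,c-b)=\frac{1}{B(b,c-b)}\int_0^1 t^{b-1}(1-t)^{c-b-1}\,dt=1$. Combining this with the first relation in \eqref{eq1}, which gives ${}_1F_1^{(\alpha,\beta;0)}(b;c;z)={}_1F_1(b;c;z)$, and feeding both into the definition \eqref{R1}, yields ${}_1R_1^{(\alpha,\beta;0)}(b;c;z)={}_1F_1(b;c;z)$, and likewise at the point $z_0$.

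Next I would simply set $p=0$ throughout Theorem~\ref{thA}. Its hypotheses $0<b<c$, $\alpha,\beta>0$ and $0<z\le z_0$ are exactly the ones retained in the corollary; since the conclusion no longer involves $\alpha$ or $\beta$, the inequalities then hold for every admissible choice of these two parameters. Replacing ${}_1R_1^{(\alpha,\beta;0)}(b;c;z)$ by ${}_1F_1(b;c;z)$ and ${}_1R_1^{(\alpha,\beta;0)}(b;c;z_0)$ by ${}_1F_1(b;c;z_0)$ in \eqref{ineg0} and \eqref{ineqGruss01}, and noting that the right-hand members $\frac{(e^{z_0}-1)^2}{4}\,\frac{z}{z_0}$ and $\frac{e^{z_0}}{4}$ carry no dependence on $p$, reproduces verbatim the two inequalities of the statement.

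I do not anticipate any genuine obstacle: the only point demanding a touch of care is the passage to $p=0$ inside the integral \eqref{N}, which is legitimate because the convergence there is uniform (Remark~\ref{remA}), so the limit may be taken under the integral sign. Once the reduction \eqref{p=0} is secured, the remainder is pure substitution into the already-proved Theorem~\ref{thA}.
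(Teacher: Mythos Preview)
Your proposal is correct and follows exactly the paper's own route: set $p=0$ in Theorem~\ref{thA} and invoke the reduction ${}_1R_1^{(\alpha,\beta;0)}={}_1F_1$ from \eqref{p=0}. Your explicit verification of \eqref{p=0} is a welcome expansion of what the paper simply quotes; the only superfluous remark is the appeal to a limit under the integral sign, since $p=0$ is an admissible parameter value and ${}_1F_1(\alpha;\beta;0)=1$ holds by direct evaluation, so no limiting argument is needed.
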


Our second main result reads as follows.

\begin{theorem}\label{thI0}
Let $a>0, 0<b< c, \alpha>0,\beta>0, p\geq0$  and $0<z\leq z_0<1$. Then we have the following inequalities
\begin{multline}\label{ineqI0}
\bigg\vert \big((1-z_0)^{-a}-1\big)\, \left(\,_2R_1^{(\alpha,\beta;p)}(a,b;c;z)-1\right)\\
-\big((1-z)^{-a}-1\big)\, \left(\;_2R_1^{(\alpha,\beta;p)}(a,b;c;z_0)-1\right)\bigg\vert \le
\frac{z}{4}  \big((1-z_0)^{-a}-1\big)^2,
\end{multline}
\begin{equation}\label{ineqIa0}
\bigg\vert (1-z)^{a}\,_2R_1^{(\alpha,\beta;p)}(a,b;c;z)-(1-z_0)^{a}\,_2R_1^{(\alpha,\beta;p)}(a,b;c;z_0)\bigg\vert \le
\frac{z}{4}\frac{(1-z)^a}{(1-z_0)^{a}}.
\end{equation}
\end{theorem}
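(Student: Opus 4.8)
The plan is to imitate the proof of Theorem \ref{thA}, replacing the exponential series by the binomial one. The guiding observation is that
\[
(1-z)^{-a}=\sum_{k=0}^{\infty}(a)_k\,\frac{z^k}{k!},\qquad |z|<1,
\]
plays here exactly the role that $e^z=\sum_{k\ge0}z^k/k!$ played there. This dictates the choice of sequences: I would take
\[
x_k=B_p^{(\alpha,\beta)}(b+k,c-b),\qquad y_k=\Big(\frac{z}{z_0}\Big)^k,\qquad m_k=\frac{(a)_k\,z_0^k}{k!}\quad(k\ge0).
\]
Here $m_k\ge0$ because $a>0$ and $z_0>0$, and the hypothesis $0<z\le z_0<1$ ensures convergence of the binomial series and provides the bounds $0\le y_k\le z/z_0$ for $k\ge1$ (respectively $0\le y_k\le1$ for $k\ge0$), while the monotonicity of $(x_k)$ gives $0\le x_k\le B_p^{(\alpha,\beta)}(b,c-b)$ exactly as in \eqref{M}.

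To obtain \eqref{ineqI0}, I would apply the weighted Gr\"uss inequality \eqref{GD} to $(x_k),(y_k)$ with weights $(m_k)$ for $k=1,\dots,n$, then let $n\to\infty$ (legitimate by Remark \ref{remA}). The four sums are evaluated just as in \eqref{u01}--\eqref{w01}: the binomial identity gives $\sum_{k\ge1}m_k=(1-z_0)^{-a}-1$ and $\sum_{k\ge1}m_k y_k=(1-z)^{-a}-1$, whereas the expansion \eqref{GGHF} of the (GGHF) yields
\[
\sum_{k\ge1}m_k x_k y_k=B(b,c-b)\,{}_2F_1^{(\alpha,\beta;p)}(a,b;c;z)-B_p^{(\alpha,\beta)}(b,c-b),
\]
together with the analogous formula for $\sum_{k\ge1}m_k x_k$ with $z_0$ in place of $z$. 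Substituting these into \eqref{GD}, dividing by the common factor $B_p^{(\alpha,\beta)}(b,c-b)$, and rewriting through \eqref{N} and \eqref{R2} in terms of ${}_2R_1^{(\alpha,\beta;p)}$ produces \eqref{ineqI0}.

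For \eqref{ineqIa0} I would run the identical scheme but let $k$ range from $0$ to $n$, so that the $k=0$ terms are kept and the four sums become $(1-z_0)^{-a}$, $(1-z)^{-a}$, $B(b,c-b)\,{}_2F_1^{(\alpha,\beta;p)}(a,b;c;z)$ and the same with $z_0$, the relevant range now being $0\le y_k\le1$. Gr\"uss then bounds
\[
\big|(1-z_0)^{-a}\,{}_2R_1^{(\alpha,\beta;p)}(a,b;c;z)-(1-z)^{-a}\,{}_2R_1^{(\alpha,\beta;p)}(a,b;c;z_0)\big|,
\]
and multiplying this intermediate estimate through by $(1-z)^a(1-z_0)^a$ and simplifying gives \eqref{ineqIa0}. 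I expect the only delicate part of the whole argument to be the bookkeeping: separating the $k=0$ term in the two series and correctly carrying the normalizing factor $B_p^{(\alpha,\beta)}(b,c-b)$ when passing from ${}_2F_1^{(\alpha,\beta;p)}$ to ${}_2R_1^{(\alpha,\beta;p)}$, since it is precisely the placement of that factor (and of the ratio $z/z_0$ coming from the range of $y_k$) that fixes the constant on the right-hand sides. Everything else reduces to the same termwise computation already carried out for Theorem \ref{thA}.
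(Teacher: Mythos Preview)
Your proposal is essentially identical to the paper's own proof: the same weights $m_k=(a)_k z_0^k/k!$, the same two bounded sequences $(z/z_0)^k$ and $B_p^{(\alpha,\beta)}(b+k,c-b)$ (you merely interchange which one is called $x_k$ and which $y_k$, which is immaterial in Gr\"uss), the same passage to the limit, and the same split between $k\ge1$ for \eqref{ineqI0} and $k\ge0$ for \eqref{ineqIa0}. The paper likewise dispatches \eqref{ineqIa0} by a one-line reference to ``similar arguments as in the proof of Theorem~\ref{thA}'', exactly as you outline.
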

\begin{proof}
We consider the following sequences,
$$x_k=\Big(\frac{z}{z_0}\Big)^k,\; y_k= B_p^{(\alpha,\beta)}(b+k,c-b)\; \text{ and }\; m_k=(a)_k\frac{z_0^k}{k!},\;\;k\geq0.$$
As above, we have for all $k \ge 1$,
\begin{gather*}
0\le x_k \le \frac{z}{z_0} \text{ and } 0\le y_k \le B_p^{(\alpha,\beta)}(b,c-b).
\end{gather*}

Here we have
\begin{multline}\label{w0}
\sum_{k=1}^{\infty} m_k\,x_k=\sum_{k=1}^{\infty}(a)_k \, \frac{z^k}{k!} = (1-z)^{-a}-1,\\
\sum_{k=1}^{\infty} m_k=\sum_{k=1}^{\infty}(a)_k\frac{z_0^k}{k!}=(1-z_0)^{-a}-1,
\end{multline}
and from \eqref{GGHF} we get,
\begin{multline}\label{u0}
\sum_{k=1}^{\infty} m_k\,x_k\,y_k= \sum_{k=1}^{\infty} (a)_k B_p^{(\alpha,\beta)}(b+k,c-b)\,\frac{z^k}{k!}\\
= B(b,c-b)\,_2F_1^{(\alpha,\beta;p)}(a,b;c;z)-B_p^{(\alpha,\beta)}(b,c-b),
\end{multline}
\begin{multline}\label{v0}
\sum_{k=1}^{\infty} m_k\,y_k=  \sum_{k=1}^{\infty}(a)_k\,B_p^{(\alpha,\beta;p)}(b+k,c-b)\;\frac{z_0^k}{k!}\\
=B(b,c-b)\;_2F_1^{(\alpha,\beta;p)}(a,b;c;z_0)-B_p^{(\alpha,\beta)}(b,c-b).
\end{multline}
According to Gr\"{u}ss inequality \eqref{GD}, if we let $n\longrightarrow \infty$ and we then substitute \eqref{w0},\eqref{u0} and \eqref{v0}, we obtain \eqref{ineqI0} after simple algebraic operations with the use of \eqref{N} and \eqref{R2}.

We prove \eqref{ineqIa0} by similar arguments as in the the proof of Theorem \ref{thA}.
\end{proof}

If we take $p=0$ in Theorem \ref{thI0} and we use the second relationship in \eqref{p=0} we get the following result.

\begin{corollary}\label{thI10}
Let $a,b,c>0$ with $b<c$ and $0<z\leq z_0 $. Then we have
\begin{multline*}
\bigg\vert \big((1-z_0)^{-a}-1\big)\,\Big(\,_2F_1(a,b;c;z)-1\Big)-\big((1-z)^{-a}-1\big)\,\Big( \;_2F_1(a,b;c;z_0)-1\Big)\bigg\vert\\
\leq\frac{z}{4}\Big((1-z_0)^{-a}-1\Big)^2,
\end{multline*}
and
\begin{gather*}
\bigg\vert (1-z)^{a}\,_2F_1(a,b;c;z) -(1-z_0)^a\,_2F_1(a,b;c;z_0)\bigg\vert \le \frac{z}{4}\;\frac{(1-z)^a}{(1-z_0)^a}.
\end{gather*}
\end{corollary}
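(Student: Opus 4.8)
The plan is to obtain the statement as the immediate $p=0$ specialization of Theorem \ref{thI0}, so that no fresh analytic estimate is required; the whole argument consists in tracking how the normalized quantities $_2R_1^{(\alpha,\beta;p)}$ collapse when $p=0$. First I would set $p=0$ throughout Theorem \ref{thI0} and record that the generalized beta function degenerates to the classical one, $B_0^{(\alpha,\beta)}(x,y)=B(x,y)$, as noted just after \eqref{BetaGeneralized}. Consequently the normalization \eqref{N} gives $B^{(\alpha,\beta;0)}(b,c-b)=1$, so the denominator in the definition \eqref{R2} becomes $1$. Combined with \eqref{eq1}, this yields $_2R_1^{(\alpha,\beta;0)}(a,b;c;z)=\;_2F_1^{(\alpha,\beta;0)}(a,b;c;z)=\;_2F_1(a,b;c;z)$, which is precisely the second identity recorded in \eqref{p=0}.

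Next I would substitute this identity, applied at both $z$ and $z_0$, into the two inequalities \eqref{ineqI0} and \eqref{ineqIa0} of Theorem \ref{thI0}. Each occurrence of $_2R_1^{(\alpha,\beta;p)}(a,b;c;\cdot)$ is replaced by $_2F_1(a,b;c;\cdot)$, while the right-hand sides are left untouched, since they involve only the elementary factors $(1-z)^{\pm a}$ and $(1-z_0)^{\pm a}$ and never the regularized function. The parameters $\alpha$ and $\beta$ then drop out of both sides, and the two displayed inequalities of the corollary emerge verbatim from \eqref{ineqI0} and \eqref{ineqIa0}, respectively.

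The only point needing a word of care is the range of $z_0$: Theorem \ref{thI0} is stated under $0<z\le z_0<1$, and this restriction is exactly what guarantees convergence of the defining series \eqref{GHFexpansion1} and the finiteness of $(1-z)^{-a}$ and $(1-z_0)^{-a}$. I would therefore read the corollary's hypothesis $0<z\le z_0$ together with the tacit assumption $z_0<1$, under which the substitution above is fully legitimate. There is essentially no obstacle to overcome here: the reduction is mechanical once the identity $_2R_1^{(\alpha,\beta;0)}=\;_2F_1$ from \eqref{p=0} is in hand, and the proof terminates at that substitution.
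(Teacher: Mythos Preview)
Your proposal is correct and matches the paper's own argument exactly: the paper derives the corollary simply by taking $p=0$ in Theorem~\ref{thI0} and invoking the second identity in \eqref{p=0}. Your additional remark about the tacit restriction $z_0<1$ is a fair observation, since the paper's statement of the corollary omits it while the parent theorem requires it.
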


We also have the following main result.

\begin{theorem}\label{thB}
Let $0<b<c$, $\alpha>0,\beta>0,p\geq0$, $0\leq z_1\leq1$, $0\leq z_2\leq1$ and $z_3\geq0$. Then the following inequality holds:
\begin{gather*}
\left\vert \,_1F_1^{(\alpha,\beta;p)}\big(b;c;z_1z_2z_3\big)-e^{(z_2-1)z_3}\times\,_1F_1^{(\alpha,\beta;p)}\big(b;c;z_1z_3\big)\right \vert \le
     \frac{1}{4}e^{z_3}\;B^{(\alpha,\beta;p)}(b,c-b).
\end{gather*}
\end{theorem}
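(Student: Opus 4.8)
The plan is to reproduce the scheme of Theorems \ref{thA} and \ref{thI0}: apply the weighted Gr\"uss inequality \eqref{GD} to a well-chosen triple of sequences and then pass to the limit so that the resulting series collapse into the two functions appearing in the statement together with two elementary exponentials. For $k\geq0$ I would take
\begin{gather*}
x_k=B_p^{(\alpha,\beta)}(b+k,c-b)\,z_1^k,\qquad y_k=z_2^k,\qquad m_k=\frac{z_3^k}{k!}.
\end{gather*}
The whole point of this choice is algebraic: $m_k\,x_k\,y_k=B_p^{(\alpha,\beta)}(b+k,c-b)\,(z_1z_2z_3)^k/k!$ and $m_k\,x_k=B_p^{(\alpha,\beta)}(b+k,c-b)\,(z_1z_3)^k/k!$, so that after summation and use of \eqref{GCHF} these two sums become $B(b,c-b)\,{}_1F_1^{(\alpha,\beta;p)}(b;c;z_1z_2z_3)$ and $B(b,c-b)\,{}_1F_1^{(\alpha,\beta;p)}(b;c;z_1z_3)$, while $\sum_k m_k=e^{z_3}$ and $\sum_k m_k\,y_k=e^{z_2z_3}$.

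Next I would check the hypotheses of Lemma \ref{lemCD}. Since $z_3\geq0$, the weights satisfy $m_k\geq0$; since $0\leq z_2\leq1$, we have $0\leq y_k\leq1$, whence $\Phi-\varphi=1$. For the sequence $(x_k)$, exactly as in \eqref{M}, the factor $B_p^{(\alpha,\beta)}(b+k,c-b)$ is nonnegative and decreasing in $k$ (increasing the power of $t\in(0,1)$ inside \eqref{BetaGeneralized} only shrinks the integral), and $z_1^k$ is decreasing because $0\leq z_1\leq1$; hence $(x_k)$ is nonnegative and decreasing, so $0\leq x_k\leq x_0=B_p^{(\alpha,\beta)}(b,c-b)$ and therefore $\Gamma-\gamma=B_p^{(\alpha,\beta)}(b,c-b)$.

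Applying \eqref{GD} over $k=0,1,\dots,n$, letting $n\to\infty$ (permissible by Remark \ref{remA}), and substituting the four limits then gives
\begin{multline*}
B(b,c-b)\,e^{z_3}\left\vert\,{}_1F_1^{(\alpha,\beta;p)}(b;c;z_1z_2z_3)-e^{(z_2-1)z_3}\,{}_1F_1^{(\alpha,\beta;p)}(b;c;z_1z_3)\right\vert\\
\le\frac14\,e^{2z_3}\,B_p^{(\alpha,\beta)}(b,c-b),
\end{multline*}
where I have used the factorization $e^{z_2z_3}=e^{z_3}\,e^{(z_2-1)z_3}$ to pull out the exponential prefactor. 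Dividing both sides by $B(b,c-b)\,e^{z_3}$ and recognizing, via \eqref{N}, that $B_p^{(\alpha,\beta)}(b,c-b)/B(b,c-b)=B^{(\alpha,\beta;p)}(b,c-b)$ produces exactly the asserted bound.

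I expect the only nonroutine point to be the control of $(x_k)$, namely the inequality $0\leq x_k\leq B_p^{(\alpha,\beta)}(b,c-b)$. This relies on two facts: that the integrand in \eqref{BetaGeneralized} is nonnegative, which follows from ${}_1F_1(\alpha;\beta;-p/t(1-t))\geq0$ for $p\geq0$ through its integral representation \eqref{CHFinIntegral}; and that the monotone decrease of both $B_p^{(\alpha,\beta)}(b+k,c-b)$ and $z_1^k$ keeps the supremum of $x_k$ at the index $k=0$. Once this is in place, the remainder is the same bookkeeping already carried out in the earlier theorems.
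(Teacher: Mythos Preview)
Your proof is correct and follows essentially the same approach as the paper: the paper chooses the identical sequences $x_k=z_1^k\,B_p^{(\alpha,\beta)}(b+k,c-b)$, $y_k=z_2^k$, $m_k=z_3^k/k!$, invokes the same bounds $0\le x_k\le B_p^{(\alpha,\beta)}(b,c-b)$ and $0\le y_k\le 1$, applies \eqref{GD}, and then passes to the limit and normalizes exactly as you do. If anything, you supply more justification for the monotonicity and nonnegativity of $(x_k)$ than the paper, which simply asserts the bounds.
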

\begin{proof}
We now consider the following sequences,
\begin{gather*}
x_k= z_1^{k}\;B_p^{(\alpha,\beta)}(b+k,c-b),\; y_k= z_2^{k} \text{ and } m_k=\frac{z_3^{k}}{k!},\;\;k\geq0.
\end{gather*}
We have
\begin{gather*}
\forall k\geq0\;\;\; 0\leq x_k\le B_p^{(\alpha,\beta)}(b,c-b),\; 0\leq y_k\leq1.
\end{gather*}
The Gr\"{u}ss inequality \eqref{GD} yields
\begin{multline}\label{inequa1proof}
    \left\vert \sum_{k=0}^{n} m_k \times \sum_{k=0}^{n} m_k\,x_k\,y_k-\sum_{k=0}^{n} m_k\,x_k \times \sum_{k=0}^{n} m_k\,y_k\right \vert \le \\
     \frac{1}{4}\left(\sum_{k=0}^{n} m_k\right)^2\;B_p^{(\alpha,\beta)}(b,c-b).
\end{multline}
Computing, we have
\begin{gather*}
\sum_{k=0}^{\infty} m_k=\sum_{k=0}^{\infty} \frac{z_3^k}{k!}=e^{z_3} \text{ and } \sum_{k=0}^{\infty} m_k\,y_k=\sum_{k=0}^{\infty}\,\frac{(z_2z_3)^k}{k!}=e^{z_2z_3}.
\end{gather*}
and from \eqref{GCHF} we get, respectively,
\begin{gather*}
\sum_{k=0}^{\infty} m_k\,x_k\,y_k= \sum_{k=0}^{\infty} B_p^{(\alpha,\beta)}(b+k,c-b)\,\frac{(z_1z_2z_3)^k}{k!}=
B(b,c-b)\,_1F_1^{(\alpha,\beta;p)}\left(b;c;z_1z_2z_3\right),
\end{gather*}
\begin{gather*}
\sum_{k=0}^{\infty} m_k\,x_k=\sum_{k=0}^{\infty}B_p^{(\alpha,\beta)}(b+k,c-b)\,\frac{(z_1z_3)^k}{k!}= B(b,c-b)\,_1F_1^{(\alpha,\beta;p)}\left(b;c;z_1z_3\right).
\end{gather*}
Taking into account these latter expressions, letting  $n\longrightarrow \infty$ in \eqref{inequa1proof} and using \eqref{N} and \eqref{R1}, we then deduce the desired inequality.
\end{proof}

The particular case $p=0$ in Theorem \ref{thB} gives the following result.

\begin{corollary}
With the same assumptions as in Theorem \ref{thB}, we have
\begin{gather*}
\left\vert \,_1F_1\big(b;c;z_1z_2z_3\big)-e^{(z_1-1)z_3}\times\,_1F_1\big(b;c;z_1z_3\big)\right \vert \le
     \frac{1}{4}e^{z_3}.
\end{gather*}
\end{corollary}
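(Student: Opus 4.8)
The plan is to read this corollary as the degenerate case $p=0$ of Theorem \ref{thB}, which is exactly what the phrase ``with the same assumptions as in Theorem \ref{thB}'' signals. First I would record the two simplifications that occur at $p=0$. By \eqref{eq1} we have $_1F_1^{(\alpha,\beta;0)}(b;c;w)=\;_1F_1(b;c;w)$ for every argument $w$, so both generalized confluent functions in Theorem \ref{thB} collapse to ordinary Kummer functions. Moreover, putting $p=0$ in the integrand of \eqref{N} forces the inner Kummer factor to be $_1F_1(\alpha;\beta;0)=1$, whence
\begin{equation*}
B^{(\alpha,\beta;0)}(b,c-b)=\frac{1}{B(b,c-b)}\int_0^1 t^{b-1}(1-t)^{c-b-1}\,dt=1 .
\end{equation*}

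Next I would substitute these two facts into the inequality of Theorem \ref{thB}. The generalized functions on the left become ordinary $_1F_1$'s, while on the right the beta factor $B^{(\alpha,\beta;0)}(b,c-b)$ equals $1$, so the bound simplifies to $\tfrac14 e^{z_3}$. No extra hypotheses are needed, since $0\le z_1\le1$, $0\le z_2\le1$, $z_3\ge0$ are precisely the conditions inherited from Theorem \ref{thB}. Carried out literally, this specialization yields $\big|\,_1F_1(b;c;z_1z_2z_3)-e^{(z_2-1)z_3}\,_1F_1(b;c;z_1z_3)\big|\le\tfrac14 e^{z_3}$.

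The one genuine obstacle is the exponential prefactor, and here the displayed statement cannot be taken verbatim. The multiplier produced by the specialization is $e^{(z_2-1)z_3}$, coming from $\sum_k m_k y_k/\sum_k m_k=e^{z_2z_3}/e^{z_3}$ in the proof of Theorem \ref{thB}, whereas the corollary prints $e^{(z_1-1)z_3}$. These are not interchangeable: interchanging the roles of $z_1$ and $z_2$ in the Gr\"uss sequences does replace $e^{(z_2-1)z_3}$ by $e^{(z_1-1)z_3}$, but it simultaneously changes the surviving Kummer argument from $z_1z_3$ into $z_2z_3$, so neither assignment reproduces the exact pairing of $e^{(z_1-1)z_3}$ with $\,_1F_1(b;c;z_1z_3)$ as printed. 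I would therefore treat the printed $e^{(z_1-1)z_3}$ as a misprint for $e^{(z_2-1)z_3}$; with that single correction the corollary is exactly the $p=0$ instance of Theorem \ref{thB} and the proof closes in the two steps above. A quick sanity check confirms that the correction is necessary rather than cosmetic: taking $b=1,c=2$ (so that $\,_1F_1(1;2;w)=(e^{w}-1)/w$) and $z_1=1,z_2=0,z_3=3$, the literally printed left-hand side equals $|1-(e^{3}-1)/3|\approx5.36$, which exceeds the right-hand side $\tfrac14 e^{3}\approx5.02$, so the inequality as worded fails, while the corrected exponent $e^{(z_2-1)z_3}$ restores it.
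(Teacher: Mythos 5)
Your proposal is correct and is exactly the paper's own route: the paper derives this corollary by setting $p=0$ in Theorem \ref{thB}, using \eqref{eq1} to collapse $_1F_1^{(\alpha,\beta;0)}$ to $_1F_1$ and the fact that $_1F_1(\alpha;\beta;0)=1$ forces $B^{(\alpha,\beta;0)}(b,c-b)=1$ in \eqref{N}, precisely your two simplifications. Your further diagnosis is also right: the exponent $e^{(z_1-1)z_3}$ printed in the corollary is a misprint for the $e^{(z_2-1)z_3}$ appearing in Theorem \ref{thB} itself, and your numerical check (with $_1F_1(1;2;w)=(e^w-1)/w$, $z_1=1$, $z_2=0$, $z_3=3$ giving a left side $\approx 5.36$ against a right side $\approx 5.02$) correctly shows the statement as literally printed is false, while the corrected exponent makes it the exact $p=0$ instance of the theorem.
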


Finally, we state an analogous result of Theorem \ref{thB} with $_2F_1^{(\alpha,\beta;p)}(a,b;c;z)$ instead of $_1F_1^{(\alpha,\beta;z)}(b;c;z)$.

\begin{theorem}\label{thC}
Let $a>0, 0<b< c,\alpha>0,\beta>0, p\geq0, 0\leq z_1\leq1, 0\leq z_2\leq1$ and $0\leq z_3<1$. Then we have
\begin{multline*}
\left\vert \,(1-z_2z_3)^a\,_2F_1^{(\alpha,\beta;p)}\big(a,b;c;z_1z_2z_3\big)-(1-z_3)^a\,_2F_1^{(\alpha,\beta;p)}\big(a,b;c;z_1z_3\big) \right \vert\\
\leq\frac{1}{4}\frac{(1-z_2z_3)^a}{(1-z_3)^a}\;B^{(\alpha,\beta;p)}(b,c-b).
\end{multline*}
\end{theorem}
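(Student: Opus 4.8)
The plan is to follow the same template as in the proof of Theorem \ref{thB}, modifying only the weight sequence so that the extra factor $(a)_k$ which distinguishes $_2F_1^{(\alpha,\beta;p)}$ from $_1F_1^{(\alpha,\beta;p)}$ (compare \eqref{GGHF} with \eqref{GCHF}) is absorbed into the weights. Concretely, for $k\ge0$ I would set
\[
x_k=z_1^{k}\,B_p^{(\alpha,\beta)}(b+k,c-b),\qquad y_k=z_2^{k},\qquad m_k=(a)_k\,\frac{z_3^{k}}{k!}.
\]
The weights are nonnegative since $a>0$ and $z_3\ge0$, and the binomial-type series $\sum_{k\ge0}(a)_k z_3^k/k!=(1-z_3)^{-a}$ converges precisely because $0\le z_3<1$, which is exactly the stated hypothesis on $z_3$.

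Next I would record the bounds needed to feed \eqref{GD}. Since $0\le z_2\le1$ we have $0\le y_k\le1$, so the oscillation of $(y_k)$ is at most $1$. For $(x_k)$, the sequence $B_p^{(\alpha,\beta)}(b+k,c-b)$ is nonnegative and decreasing in $k$ (as already used in the proof of Theorem \ref{thA}), while $0\le z_1\le1$ makes $z_1^{k}$ decreasing as well; hence $x_k$ is decreasing with $0\le x_k\le x_0=B_p^{(\alpha,\beta)}(b,c-b)$, giving oscillation at most $B_p^{(\alpha,\beta)}(b,c-b)$. I would then evaluate the four partial sums in the limit $n\to\infty$, the interchange of limit and summation being licensed by Remark \ref{remA}: one gets $\sum_k m_k=(1-z_3)^{-a}$, $\sum_k m_k y_k=(1-z_2z_3)^{-a}$, and, reading off \eqref{GGHF},
\[
\sum_{k=0}^{\infty} m_k x_k y_k=B(b,c-b)\,_2F_1^{(\alpha,\beta;p)}(a,b;c;z_1z_2z_3),
\]
\[
\sum_{k=0}^{\infty} m_k x_k=B(b,c-b)\,_2F_1^{(\alpha,\beta;p)}(a,b;c;z_1z_3).
\]

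Substituting these into \eqref{GD}, letting $n\to\infty$, dividing by $B(b,c-b)$, and using \eqref{N} in the form $B_p^{(\alpha,\beta)}(b,c-b)=B(b,c-b)\,B^{(\alpha,\beta;p)}(b,c-b)$ to rewrite the right-hand constant, I would arrive at
\begin{multline*}
\left\vert (1-z_3)^{-a}\,_2F_1^{(\alpha,\beta;p)}(a,b;c;z_1z_2z_3)-(1-z_2z_3)^{-a}\,_2F_1^{(\alpha,\beta;p)}(a,b;c;z_1z_3)\right\vert\\
\le\frac{1}{4}(1-z_3)^{-2a}\,B^{(\alpha,\beta;p)}(b,c-b).
\end{multline*}
The final, and really the only delicate, step is the algebraic normalization: multiplying both sides by the positive quantity $(1-z_3)^{a}(1-z_2z_3)^{a}$ converts the two leading factors into $(1-z_2z_3)^{a}$ and $(1-z_3)^{a}$ respectively, and collapses the right-hand constant into $\frac14\,\frac{(1-z_2z_3)^{a}}{(1-z_3)^{a}}\,B^{(\alpha,\beta;p)}(b,c-b)$, which is exactly the claimed bound. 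I expect the only point requiring genuine care to be selecting this multiplier correctly and verifying that all the factors $(1-z_3)^{a}$ and $(1-z_2z_3)^{a}$ are positive, which holds since $z_3,z_2z_3\in[0,1)$; there is no analytic obstacle beyond the routine limit passage already justified by Remark \ref{remA}.
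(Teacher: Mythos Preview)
Your proposal is correct and follows exactly the same approach as the paper: the same choice of sequences $x_k=z_1^{k}B_p^{(\alpha,\beta)}(b+k,c-b)$, $y_k=z_2^{k}$, $m_k=(a)_k z_3^{k}/k!$, the same bounds, the same identification of the four sums, and the same application of \eqref{GD}. You are in fact more explicit than the paper about the final normalization step (multiplying through by $(1-z_3)^{a}(1-z_2z_3)^{a}$), which the paper leaves implicit.
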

\begin{proof}
Let us consider the following sequences,
\begin{gather*}
x_k= z_1^{k}\;B_p^{(\alpha,\beta)}(b+k,c-b),\; y_k= z_2^{k}\;  \text{ and }\; m_k=(a)_k\frac{z_3^k}{k!},\;\; k\geq0.
\end{gather*}
It is obvious that
$$\forall k\geq0\;\;\;\;\;0\leq x_k\leq B_p^{(\alpha,\beta)}(b,c-b),\;\; 0\leq y_k\leq 1.$$

We have the following formulas
\begin{gather*}
\sum_{k=0}^{\infty} m_k=\sum_{k=0}^{\infty}(a)_k\frac{z_3^k}{k!}=(1-z_3)^{-a} \text{ and }
\sum_{k=0}^{\infty} m_k\,y_k=\sum_{k=0}^{\infty}(a)_k\,\frac{(z_2z_3)^k}{k!}=(1-z_2z_3)^{-a},
\end{gather*}
and from \eqref{GGHF} we get, respectively,
\begin{gather*}
\sum_{k=0}^{\infty} m_k\,x_k\,y_k= \sum_{k=0}^{\infty} B_p^{(\alpha,\beta)}(b+k,c-b)\,\frac{(z_1z_2z_3)^k}{k!}= B(b,c-b)\,_2F_1^{(\alpha,\beta;p)}\big(a,b;c;z_1z_2z_3\big),
\end{gather*}
\begin{gather*}
\sum_{k=0}^{\infty} m_k\,x_k=\sum_{k=0}^{\infty}(a)_k\,B_p^{(\alpha,\beta)}(b+k,c-b)\,\frac{(z_1z_3)^k}{k!}= B(b,c-b)\,_2F_1^{(\alpha,\beta;p)}\big(a,b;c;z_1z_3\big).
\end{gather*}
As previous, applying the Gr\"{u}ss inequality \eqref{GD}, taking into account these last sums and letting  $n\longrightarrow \infty$, we deduce the desired inequality. The details are similar to those of the previous results and therefore omitted here.
\end{proof}

The case $p=0$ in Theorem \ref{thC} corresponds to the following result.

\begin{corollary}
With the same conditions as in Theorem \ref{thC} we have
\begin{equation*}
\left\vert \,(1-z_2z_3)^a\,_2F_1\big(a,b;c;z_1z_2z_3\big)-(1-z_3)^a\,_2F_1\big(a,b;c;z_1z_3\big) \right \vert
\leq\frac{1}{4}\frac{(1-z_2z_3)^a}{(1-z_3)^a}.
\end{equation*}
\end{corollary}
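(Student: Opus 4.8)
The plan is to mimic exactly the proof structure of Theorem~\ref{thB}, but with the weights $m_k$ adjusted to produce the factor $(a)_k$ that distinguishes the generalized Gauss hypergeometric function $_2F_1^{(\alpha,\beta;p)}$ from the confluent one. Concretely, I would introduce the three real families indexed by $k\geq 0$,
\begin{gather*}
x_k= z_1^{k}\,B_p^{(\alpha,\beta)}(b+k,c-b),\qquad y_k= z_2^{k},\qquad m_k=(a)_k\,\frac{z_3^{k}}{k!},
\end{gather*}
and then verify the hypotheses of the discrete Gr\"{u}ss inequality (Lemma~\ref{lemCD}). Since $a>0$ we have $(a)_k\geq0$, so $m_k\geq0$ as required. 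Because $0\leq z_1\leq1$ and $(x_k)=\big(z_1^k\,B_p^{(\alpha,\beta)}(b+k,c-b)\big)$ is a product of a decreasing-in-$k$ factor $z_1^k\leq1$ with a quantity bounded by $B_p^{(\alpha,\beta)}(b,c-b)$ (this last monotonicity is the same fact recorded in \eqref{M}, coming from $(x_k)$ decreasing), we obtain $0\leq x_k\leq B_p^{(\alpha,\beta)}(b,c-b)$; and from $0\leq z_2\leq1$ we get $0\leq y_k\leq1$. Thus the amplitudes are $\Gamma-\gamma=B_p^{(\alpha,\beta)}(b,c-b)$ and $\Phi-\varphi=1$.

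Next I would compute the four sums appearing in \eqref{GD}, letting $n\to\infty$, which is legitimate by the uniform convergence asserted in Remark~\ref{remA}. The weight sums collapse to binomial-series values:
\begin{gather*}
\sum_{k=0}^{\infty} m_k=\sum_{k=0}^{\infty}(a)_k\frac{z_3^k}{k!}=(1-z_3)^{-a},\qquad
\sum_{k=0}^{\infty} m_k\,y_k=\sum_{k=0}^{\infty}(a)_k\frac{(z_2z_3)^k}{k!}=(1-z_2z_3)^{-a},
\end{gather*}
valid since $0\leq z_3<1$ and $0\leq z_2z_3<1$. The two mixed sums are read off from the series definition \eqref{GGHF}: the product $m_k x_k y_k$ carries $(a)_k\,z_1^k z_2^k z_3^k B_p^{(\alpha,\beta)}(b+k,c-b)/k!$, which sums to $B(b,c-b)\,_2F_1^{(\alpha,\beta;p)}(a,b;c;z_1z_2z_3)$, while $m_k x_k$ sums to $B(b,c-b)\,_2F_1^{(\alpha,\beta;p)}(a,b;c;z_1z_3)$.

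Substituting all four sums into \eqref{GD} gives, after dividing through by $B(b,c-b)$ and invoking the normalization \eqref{N} to identify $B_p^{(\alpha,\beta)}(b,c-b)/B(b,c-b)=B^{(\alpha,\beta;p)}(b,c-b)$, the estimate
\begin{multline*}
(1-z_3)^{-a}\,B(b,c-b)\,_2F_1^{(\alpha,\beta;p)}(a,b;c;z_1z_2z_3)\\
-(1-z_2z_3)^{-a}\,B(b,c-b)\,_2F_1^{(\alpha,\beta;p)}(a,b;c;z_1z_3)
\end{multline*}
bounded in absolute value by $\tfrac14(1-z_3)^{-2a}B_p^{(\alpha,\beta)}(b,c-b)$; multiplying both sides by $(1-z_2z_3)^a(1-z_3)^a$ and simplifying yields the claimed inequality. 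The only genuinely delicate point is the verification of the bound $0\leq x_k\leq B_p^{(\alpha,\beta)}(b,c-b)$, i.e.\ that $z_1^k B_p^{(\alpha,\beta)}(b+k,c-b)$ never exceeds its $k=0$ value; this rests on the monotonic decrease of $B_p^{(\alpha,\beta)}(b+k,c-b)$ in $k$ together with $z_1^k\leq1$, exactly as used in \eqref{M}. Everything else is the routine algebraic manipulation common to the earlier theorems, so I would simply note that the details parallel the proof of Theorem~\ref{thB}.
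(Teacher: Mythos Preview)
Your argument is exactly the paper's proof of Theorem~\ref{thC}: the same three sequences $x_k=z_1^kB_p^{(\alpha,\beta)}(b+k,c-b)$, $y_k=z_2^k$, $m_k=(a)_k z_3^k/k!$, the same bounds, the same four limiting sums, and the same final multiplication by $(1-z_2z_3)^a(1-z_3)^a$. What you arrive at is therefore the inequality of Theorem~\ref{thC} for general $p\geq0$, not yet the corollary; you still need the one-line specialization $p=0$, which by \eqref{eq1} turns $_2F_1^{(\alpha,\beta;0)}$ into $_2F_1$ and by $B_0^{(\alpha,\beta)}(x,y)=B(x,y)$ collapses the normalized factor $B^{(\alpha,\beta;0)}(b,c-b)$ to $1$. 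That is precisely how the paper obtains the corollary, so once you add this last sentence your proof coincides with the paper's.
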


\section{Concluding remarks}
Studies concerning the hypergeometric functions are increasing, especially to provide generalizations and to establish suitable properties that allow to undertake various calculus related to those special functions. \\
In this paper, we have been interested to this last topic. Namely, we investigated some approximations involving the generalized Gauss hypergeometric and the generalized confluent hypergeometric functions. The use of the discrete Gr\"{u}ss inequality enabled us to establish several analytic inequalities for these two functions.

\end{document}